\theoremstyle{plain}
\newtheorem{theorem}{Theorem}[section]
\newtheorem{prop}[theorem]{Proposition}
\newtheorem{cor}[theorem]{Corollary}
\def\det{\mathop{\mathrm{det}}\nolimits}
\def\Im{\mathop{\mathrm{Im}}\nolimits}
\def\Ker{\mathop{\mathrm{Ker}}\nolimits}
\def\Coker{\mathop{\mathrm{Coker}}\nolimits}
\def\Spec{\mathop{\mathrm{Spec}}\nolimits}
\def\CH{\mathop{\mathrm{CH}}\nolimits}
\newcommand{\bb}[1]{{\mathbb{#1}}}
\def\Ker{\mathop{\mathrm{Ker}}\nolimits}
\def\Coker{\mathop{\mathrm{Coker}}\nolimits}
\def\Spec{\mathop{\mathrm{Spec}}\nolimits}
\def\Z/2{\mathbb{F}_2}
\def\Z{\mathbb{Z}}
\def\C{\mathbb{C}}
\begin{document}
\title{The motivic cohomology of $BSO_n$}
%

\title{The motivic cohomology of $BSO_{n}$}
\author{Masana Harada}
\address{Department of Mathematics,
Kyoto University, Kyoto 606-8502, Japan}
\email{harada@kusm.kyoto-u.ac.jp}
\author{Masayuki Nakada}
\address{Kobe University~Secondary~School.Kyoto University}
\email{mnakada@people.kobe-u.ac.jp}
\maketitle
\begin{abstract}
We will determine the motivic cohomology 
$H^{\ast , \ast}(BSO_n ,\, \Z/2)$ with coefficients in $\Z/ 2$  of
the classifying space  of special orthogonal groups $SO_n$
over the complex numbers $\bb{C}$.
\end{abstract}

\setcounter{section}{-1}
\section{Introduction}
We denote by $BG_k$ the classifying space of 
a linear algebraic group $G_k$ over a field $k$.
It is a colimit of smooth schemes defined  by B.~Totaro~\cite{tot1999},
and F.~Morel and V.~Voevodsky in~\cite[\S 4]{m-v1999},
where it is called the geometric model.
$SO_n$ is the special orthogonal group of dimension $n$,
and we will treat cases where $k$ is 
the field of complex numbers $\bb{C}$.
By $H^{\ast , \ast}(X)$  we mean the bi-graded 
motivic cohomology group with coefficients in $ \Z/ 2$,
$$
\displaystyle\operatornamewithlimits\oplus_{i,j \geq 0} H^{i ,j}(X)=
\displaystyle\operatornamewithlimits\oplus_{i,j \geq 0} H^{i}(X,\, \Z/2(j)),$$ of 
a smooth scheme $X$.
We use a definition used in~\cite{voe2003-2} and~\cite{voe2003}.
We call the first index $i$ by degree and $2j-i$ by weight.
(Total weight may be appropriate. But we simply call this way.) 
Since we will only consider cohomology with  coefficients in $\Z/ 2$,
the coefficients are usually omitted in the notation.
The motivic cohomology of $BSO_n$ is
defined as a limit of motivic cohomology of smooth schemes.

The motivic cohomology groups are related to the 
Chow groups and 
ordinary cohomology groups in the following way.
$H^{2i, i}(X )$ is isomorphic to the
mod $2$ reduction $CH^i(X) \otimes \Z/ 2$ of 
the $i$-th Chow group for a smooth scheme $X$.
In our convention
$\oplus H^{2i, i}(X )$ is the subgroup of elements having weight $0$.
There are no elements having negative weights.
 There exists
a realization mapping
\begin{equation}
t^{i,j}_X : H^{i,j}(X , \, \Z/ 2)\to H^{i}(X ,\, \Z/ 2),
\end{equation}
where
$t^{2i,i}_X$ is identified to the cycle class mapping.
It was called the Beilinson-Lichtenbaum conjecture~\cite[\S 6]{voe2003-2} that
it becomes an isomorphism when $i \leq j$,
which is proved  in~\cite{voe2003-2}.

$H^{\ast}(BG, \, \Z/ 2)=H^{\ast}(BG)$
is  the
singular cohomology group 
of CW-complex $BG(\C)$, which is homotopy equivalent 
to the classifying space of a maximal compact subgroup of $G(\C)$.

When $G= GL_n, SL_n, Sp_n$, the motivic cohomology of $BG$ is known 
over arbi\-trary fields integrally. 
For example
$
H^{\ast , \ast}(BGL_n ,\,\Z )$
is equal to 
$
H^{\ast , \ast}( \Spec k,\,\Z)
[\, c_1 , \ldots , c_n]
$
where $c_i$ is the Chern class of the standard representation.
N.~Yagita determined $H^{\ast,\ast}(BSO_4)$ over $\mathbb{C}$~\cite{yag2010}, 
where
cases for 
$G=O_n,\,SO_{2m+1}, \, G_2, \,Spin_7$ 
and some other groups in different coefficients are treated.
It completes his preceding results in~\cite{yag2003}, ~\cite{yag2005}.
We will recall his argument in 1.3.
Note that $SO_2=GL_1$ for $\sqrt{-1}\in k$.
Therefore the new 
results in this paper are cases $G=SO_{2m}$ for $m \geq 3$.
 
We know that 
D.~Edidin and W.~Graham~\cite{e-g1995}
constructed 
an element 
in $CH^m(SO_{2m})$ for each $m \geq 2$
such that the mod 2 reduction of it,
$y_{0,m}$ in $H^{2m,m}(BSO_{2m})$,
becomes 0
through cycle class mapping
$$t^{\ast,\ast}_{BSO_{2m}}(y_{0,m})=0.$$
(The entire structure of the group $CH^{\ast}(SO_{2m})$ is known by
R.~Pandharipande ~\cite{pand1998} in the case $m=2$
and R.~Field~\cite{fie2001a}~\cite{fie2001} in general $m$.)
L.~Molina-Rojas and A.~Vistoli~\cite{m-v2006} gave a new insight on 
the element using the stratified method.
We will review it in 1.2.
This phenomenon is first observed by B.Totaro~\cite{tot1997}, 
that a generator in cobordism groups of $BSO_4(\C)$ studied in~\cite{k-y1993}
would not map to a generator of ordinary cohomology by
his refined cycle class mapping.
Note that the cobordism group of $BSO_n(\C)$
is determined in ~\cite{i-y2010}.

N.~Yagita found a series of elements $y_{i,m}$ 
in $H^{2m, i+m}(BSO_{2m})$ for any 
$i$ from $0$ to $m-2$ in the the kernel of 
$t^{\ast, \ast}_{BSO_{2m}}$~\cite[9.3]{yag2010},
where it was denoted by $u_{i+1}$.
These elements will be defined in 3.3.
Since $H^{\ast, \ast}(\Spec \bb{C})=\Z/2[\tau]$
($\tau \in H^{0, 1}(\Spec \bb{C})$)~\cite[7.8]{voe2003-2},
$H^{\ast, \ast}(BSO_n)$ is a $\Z/2[\tau]$-algebra.
$\Ker(t^{\ast,\ast}_{BSO_{2m}})$ becomes  the ideal of $\tau$-torsion elements
exactly~\cite[\S 6]{s-v2000}.

Our result is that $\Ker t^{\ast, \ast}_{BSO_{2m}}$ is
generated by 
 Yagita's elements $y_{i,m}$ 
over the polynomial ring of Chern classes of even indices; 
$0=\tau y_{i,m}=z y_{i,m}$
if $z$ is not a polynomial of $c_{2i}$'s and $y_{i,m}y_{j,m}=0$:
\begin{theorem}
The kernel
of realization mapping $\Ker t^{\ast,\ast}$
in $H^{\ast,\ast}(BSO_{2m})$ is
$$\Ker(t^{\ast,\ast}_{BSO_{2m}})=\Z/2[c_2, c_4, \ldots ,c_{2m}]\{y_{0,m}\, , \ldots,\, y_{m-2,m} \}.$$
\end{theorem}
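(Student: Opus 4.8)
The plan is to identify $\Ker t^{\ast,\ast}_{BSO_{2m}}$ with the submodule of $\tau$-torsion elements and then to compute that submodule exactly. By the result recalled above \cite{s-v2000}, $\Ker t^{\ast,\ast}_{BSO_{2m}}$ is precisely the kernel of the localization map $H^{\ast,\ast}(BSO_{2m})\to H^{\ast,\ast}(BSO_{2m})[\tau^{-1}]$, i.e.\ the ideal of elements annihilated by a power of $\tau$. The Beilinson--Lichtenbaum isomorphism \cite{voe2003-2} further confines this ideal to bidegrees $(i,j)$ with $i>j$, since $t^{i,j}$ is injective for $i\le j$; this is consistent with $y_{i,m}\in H^{2m,i+m}$, where $2m>i+m$ for $i\le m-2$. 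Since $t$ is a ring homomorphism its kernel is an ideal, so it suffices to describe $\Ker t^{\ast,\ast}_{BSO_{2m}}$ as a module and to match it with $M:=\Z/2[c_2,c_4,\ldots,c_{2m}]\{y_{0,m},\ldots,y_{m-2,m}\}$.

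The inclusion $M\subseteq\Ker t^{\ast,\ast}_{BSO_{2m}}$ is the easy direction: each $y_{i,m}$ lies in the kernel by Yagita's construction (recalled in 3.3), and since the kernel is an ideal and each $c_{2k}$ is a genuine class, all products $c_2^{a_2}\cdots c_{2m}^{a_m}\,y_{i,m}$ lie in it as well. What actually needs checking here are the module relations asserted alongside the theorem --- that $\tau y_{i,m}=0$, that $y_{i,m}y_{j,m}=0$, and that $z\,y_{i,m}=0$ whenever $z$ is not a polynomial in the even Chern classes --- for these are what exhibit $M$ as a \emph{free} $\Z/2[c_2,\ldots,c_{2m}]$-module on the $y_{i,m}$. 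I would read these off the multiplicative structure of $H^{\ast,\ast}(BSO_{2m})$, using bidegree bookkeeping to see that each such product lands where the torsion is already known to vanish.

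The crux is the reverse inclusion, and for it I would use the full ring structure of $H^{\ast,\ast}(BSO_{2m})$. The key point is that the singular cohomology $\Z/2[w_2,\ldots,w_{2m}]$ is entirely realized from the motivic side: besides the Chern classes, with $t(c_k)=w_k^2$, there are motivic Stiefel--Whitney classes $W_k\in H^{k,k}$ with $t(W_k)=w_k$ (the diagonal isomorphism $H^{k,k}\cong H^k$ of Beilinson--Lichtenbaum), subject to relations $W_k^2=\tau^{k}c_k$. Consequently the $\tau$-free subalgebra they generate maps isomorphically, after inverting $\tau$, onto $\Z/2[w_2,\ldots,w_{2m}][\tau^{-1}]$ and so contributes no torsion, while the torsion ideal is exactly $M$. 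To prove this splitting I would compute $H^{\ast,\ast}(BSO_{2m})$ by comparison with the known mod-$2$ singular cohomology and the mod-$2$ Chow ring of Field and Molina-Rojas--Vistoli, matching the associated graded of the $\tau$-adic filtration bidegree by bidegree; an induction on the rank through the Gysin sequence of the affine-quadric fibration $SO_{2m-1}\hookrightarrow SO_{2m}$ lets me bootstrap from the odd-rank case, which carries no exotic torsion, so that the only torsion created in passing to even rank is generated by $y_{0,m},\ldots,y_{m-2,m}$.

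The hard part will be controlling the multiplicative and $\tau$-divisibility extensions in this comparison: ruling out hidden torsion arising from products of the $y_{i,m}$ with odd Chern classes or with the $W_k$, and proving $y_{i,m}y_{j,m}=0$, so that no torsion survives outside the free $\Z/2[c_2,\ldots,c_{2m}]$-module on the $y_{i,m}$. Once the associated graded is pinned down in each bidegree, a dimension count against the singular and Chow-group data forces the $\tau$-torsion to coincide with $M$, which is the assertion $\Ker t^{\ast,\ast}_{BSO_{2m}}=M$.
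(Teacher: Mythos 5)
Your overall skeleton---identify $\Ker t^{\ast,\ast}$ with the $\tau$-torsion ideal, get the easy inclusion from Yagita's construction, and run an induction on the rank through localization/Gysin sequences, bootstrapping from the torsion-free odd case---does match the paper's outline. But the crux of your argument, the reverse inclusion, rests on a step that does not work as stated: you claim that matching the associated graded of the $\tau$-adic filtration against the mod-$2$ singular cohomology and the mod-$2$ Chow ring, followed by ``a dimension count,'' forces the torsion to coincide with $M$. Those two comparison theories only control the two extreme families of bidegrees, namely the diagonal $H^{i,i}$ (via Beilinson--Lichtenbaum) and the Chow line $H^{2i,i}=CH^i\otimes\Z/2$; the torsion classes $y_{i,m}\in H^{2m,i+m}$ for $0<i\leq m-2$, and any hypothetical hidden torsion, live in intermediate bidegrees that neither theory sees. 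No dimension count against these data can rule out extra torsion there, nor can it detect the $\tau$-divisibility relations that govern the structure (for instance $w_l$ with $l$ odd has weight $l-2$, not $l$, so your classes $W_l\in H^{l,l}$ are $\tau$-divisible and are not even the minimal-weight generators of the $\tau$-free part you want to split off).

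This is exactly the gap the paper fills with machinery your proposal has no substitute for: (i) Yagita's theorem that the weight (coniveau) filtration on $H^{*}(BO_n)$ is computed by Milnor operations ($x$ has weight $k$ iff $k$ is maximal with $Q_{i_1}\cdots Q_{i_k}x\neq 0$), which pins down the motivic cohomology of the comparison group $O_n$ in \emph{all} bidegrees; and (ii) the strictness of the weight filtration under $t(\kappa^*)\colon H^*_{SO_n}/c_n\to H^*_{O_{n-1}}/c_1$ (Proposition 2.1 of the paper), which is what makes the inductive diagram chase close up---it yields $\Im\delta\cap Y_m=0$, hence that the new torsion $\overline{Y_{m+1}}/Y_m$ created at each step is isomorphic to $\Coker\iota^*_{2m+1}$, which is then shown by an explicit monomial-basis and weight computation (Section 3.2) to be free of rank one on $\overline{w_{2m}}$ over the even Chern classes. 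Without strictness, knowing the realization maps and the topological sequences tells you nothing about where motivic lifts sit in the filtration, so the ``multiplicative and $\tau$-divisibility extensions'' you flag as the hard part remain exactly as open as the theorem itself. In short, your proposal correctly locates the difficulty but offers no mechanism to resolve it; the dimension-count shortcut is not available, and the argument as proposed would stall at the reverse inclusion.
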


Note that $\Ker(t^{2m,2m-1}_{X})=0$ for any $X$ 
(It is a direct consequence of the theorems of Voevodsky \cite[Theorem 6.1, Lemma 6.9, Theorem 7.4]{voe2003-2}).
%

\subsection{Conventions.}
Since we only consider coefficients in $\Z/2$, we will omit it.
We have $H^{\ast , \ast}(\Spec \bb{C} )= \Z/2[ \tau]$
where $\tau$ has degree $0$ and weight $2$.
The motivic cohomology groups $H^{\ast , \ast}(X)$
is a bi-graded algebra 
over $\Z/2[ \tau]$.

The group of quadratic roots of unity $\mu_2$ has 
the motivic cohomology 
$H^{\ast , \ast}(B\mu_2 )= \Z/2[ \tau ,y ]\otimes \Lambda(x)
=\Z/2[ \tau ,y ]\{ 1, x\}$
where $x \in H^{1 , 1}(B\mu_2 )$ and $x^2=\tau y$~\cite[6.10]{voe2003}.
Using the Beilinson-Lichtenbaum conjecture, we will identify
$H^{i}(X)= H^{i,i}(X)$.
We use the equivariant motivic cohomology in 1.1,
which is denoted by $H^{\ast,\ast}_{G}(X)$.
We use a simplified notation $H^{\ast,\ast}_{G}=H^{\ast,\ast}_{G}(\Spec \bb{C})=
H^{\ast,\ast}(BG)$.
Similarly 
$H^{\ast}_{G}=H^{\ast}_{G}(pt)$.

The orthogonal group $O_n$ is the algebraic subgroup of $GL_n$ 
consisting of elements preserving
the quadratic from $q(x)=\sum x_i^2$.
The special orthogonal group $SO_n$ is the subgroup of $O_n$
consisting of elements with determinant one,
and the inclusion is denoted $\epsilon : SO_{n} \subset O_n$.
We regard $O_{n-1}$ as the subgroup fixing the last coordinate in $O_n$.
Similarly for $\iota : SO_{n-1} \subset SO_{n}$.
We denote by $\iota$ any of those canonical inclusions.
We use the same notation for an induced morphism between classifying spaces
as $\iota : BSO_{n-1} \subset BSO_{n}$.
$\det : O_n \to \mu_2 \subset \bb{G}_m$ will be the  determinant morphism.

Let us denote by 
$\gamma_n$ the standard real $n$-dimensional representation of
$O_n(\mathbb{R})$, or its associated bundle on $BO_n(\mathbb{R})$.
For $G=O_n, SO_n$, $w_i \in
H^i(BG(\mathbb{R}))
=H^i(BG(\mathbb{C}))=
H^i(BG)= H^{i,i}(BG)$ 
will mean the $i$-th Stiefel-Whitney class of $\gamma_n$ or
$\epsilon^{-1} \gamma_n$.
It is classical that
$H^{\ast}(BO_n)= \Z/2[w_1, 
\\w_2, \ldots, w_n]$
and
$H^{\ast}(BSO_n)= \Z/2[w_2, \ldots, w_n]$.
The $i$-th 
Chern class $c_i$ of $\gamma \otimes \bb{C}$ or $\epsilon^{-1}$-image of it, 
is
an element in $CH^i(BG)\otimes \Z/2 = H^{2i,i}(BG)$ 
so that
$(w_i)^2=\tau^i c_i$.

\subsection{Acknowledgments.}
The authors would like to thank Nobuaki Yagita for several discussions,
and to the referees for many useful suggestions.

\section{Preliminaries}
\setcounter{subsection}{-1}
\subsection{Motivic cohomology and the
filtration by weight.}
The realization mapping
$t=t_X=t^{i,j}_X : H^{i,j}(X)\to H^{i,i}(X)=H^{i}(X)$
is just a multiplication of $\tau^{i-j}$~\cite[\S 6]{s-v2000}.
We can introduce 
a filtration by 
weight on $H^{i}(X)$,
so that
$F^j H^{i}(X)$ consists of 
elements which are the realization 
of weight less than $j$.
It means 
$x$ has weight exactly $2j-i$ if 
there exists $x'$ as $x=  \tau^{i-j}x'$
and $j$ is the minimum of such numbers.
We denote such $x'$ as $\overline{x}$, which is not  determined
uniquely if there exists non-trivial element of $\tau$-torsion.
But in all the cases appear in our paper
we have a unique choice of such element for a homogeneous $x$
by reasoning on bi-degree.
It defines a filtered ring structure on $H^{i}(X)$~\cite[\S 2, \S 6, \S 7]{yag2010},
which is equivalent to the 
coniveau filtration of Grothendieck, after shifts on degrees and 
indices of filtration. 
The precise argument is in ~\cite[\S 7]{yag2010}.

A morphism $f : X \to Y$ induces a pullback 
$f^* : H^{\ast,\ast}(Y)\to H^{\ast,\ast}(X)$ that is a homomorphism
of $\Z/2[\tau]$-algebras;
and a filtered ring homomorphism
$t(f^* ): H^{\ast}(Y)\to H^{\ast}(X)$.

There are motivic Milnor operations $Q_k$ on the motivic 
cohomology groups~\cite[13.4]{voe1996}:
$$
Q_k : H^{i,j}(X) \to 
H^{i +  2^{k+1}-1, j +2^{k}-1}(X).
$$
(Note that we assume
$\sqrt{-1}\in k$ so that $\rho$ there is trivial.)
$Q_i$ are derivations on the motivic cohomology groups,
$Q_iQ_j=Q_jQ_i$ if $i\neq j$ and $Q_iQ_i=0$. 
They commute with pullbacks and Gysin morphisms.
If $x$ has weight $i>0$, $Q_j x$ has weight $i-1$ if it is not zero.
These are compatible with the classical Milnor operations $Q_i$ on 
$H^*(X)$ in the sense that 
$t(Q_i x) = Q_i t(\tau^{i-j} x)$.
(~\cite[3.9]{voe1996}). 
We denote it by the same notation $Q_i$.

\subsection{Equivariant motivic cohomology}
Edidin and Graham extended 
the construction of the Chow ring of $BG$ to
the equivariant Chow ring of a scheme with G-action, which can be generalized 
to motivic cohomology~\cite{e-g1998}.
Let us recall the equivariant motivic cohomology~\cite[\S 8]{yag2010}.
Suppose that $G$ acts on a  smooth quasi-affine scheme $X$. 
For each $m\geq 0$, we can choose a representation $V$ of $G$ 
with an open subscheme $E$ on which $G$ acts freely, and 
$\mathrm{codim}_V(V- E)>m$.
The quotient $(E\times X)/G$ exists as a smooth scheme.
We set
\begin{equation}
H^{m,\ast}_G(X)=H^{m,\ast}((E\times X)/G).\nonumber
\end{equation}
It is independent of the choice of such $V$ for fixed $m$ and $*$.
For a morphism of groups $\phi : H \to G$, a $G$-schemes $X$, an
$H$-scheme $Y$, and 
an $H$-equivariant morphism $ f : Y \to X$ where $H$ acts on $X$ through $\phi$,
there is a pull-back
$f^* : H^{\ast, \ast}_G(X) \to H^{\ast,\ast}_H(Y)$.
For a subgroup $H$ of $G$, we see that~\cite[2.1]{m-v2006}
\begin{equation}
H_G^{\ast,\ast}((X\times G)/H)= H_H^{\ast,\ast}(X).
\end{equation}

Let  $U$ be  a unipotent group and consider a $U$-torsor $X \to Y$, we have an isomorphism
$H^{\ast,\ast}(Y)= H^{\ast,\ast}(X)$, for $U$ is isomorphic to an affine space
and any $U$-torsor is locally trivial  
~\cite[Prop.1]{gro1958}.
In particular for a semi-direct product $G \ltimes U$ the projection onto
the component $G \ltimes U \to G$ induces an isomorphism 
$H^{\ast,\ast}( B(G\ltimes U))= H^{\ast,\ast}(BG)$~\cite[2.3]{m-v2006}.

Take any two elements $u_0,u_1 $ of $U(\C)$.
$G \to G \times U$ by $x \mapsto (x,\, u_i)$  induce the same morphism 
on $H^{\ast,\ast}(B(G\times U))\to H^{\ast,\ast}(BG)$ for both are the section of the projection.
Similarly for the multiplicative group $\bb{G}_m$: $H^{\ast,\ast}(B(G\times \bb{G}_m))= H^{\ast,\ast}(BG)[y] \to H^{\ast,\ast}(BG)$.
Let $G$ be connected.
For any element $g \in G(\C)$ the conjugation by $g$ defines a morphism
$c_g : BG \to BG$, but we know that $g$ is contained in a Borel subgroup, 
and 
$c_g^*=c_e^*$ on $H^{\ast,\ast}(BG)$ for $e$ is the unit of $G$
~\cite[Lemma.1 of Proposition 4]{gro1958}.

\subsection{The stratified method}
The equivariant motivic cohomology theory has the following 
{\it localization sequence}; if we write an equivariant closed immersion 
$i\colon Y\hookrightarrow X$ 
of codimension $s$ and open immersion $j\colon X- Y\hookrightarrow X$, 
there is a long exact sequence
\begin{equation}
\rightarrow H_G^{\ast-1,\ast}(X- Y)\xrightarrow{\delta} 
H_G^{\ast-2s,\ast-s}(Y)\xrightarrow{i_*} H_G^{\ast,\ast}(X)
\xrightarrow{j^*} H_G^{\ast,\ast}(X- Y)\xrightarrow{\delta}\  
\nonumber
\end{equation}
where $i_*$ is the Gysin mapping.

If we know a stratification of a $G$-scheme $X$ by $G$-subschemes,
we may calculate $H_G^{\ast,\ast}(X)$ by repeated use of the localization sequence above 
and (1.1).
This method is introduced by Vezzosi to compute $\CH^*(BPGL_3)$~\cite{vez2000}.
Molina-Rojas and Vistoli~\cite{m-v2006}
calculated the Chow ring of $BG$ using it in cases $G=GL_n$, $SL_n$
 and $Sp_n$ over arbitrary fields, and $G=O_n$ and $SO_n$.
 over fields of characteristic different from $2$.
The following stratification associated with $O_n$ or $SO_n$ are
used 
to calculate the Chow groups and motivic cohomology in ~\cite{m-v2006} 
and~\cite[\S 9]{yag2010}.
Consider the standard representation of $SO_n$.
It makes $\mathbb{A}^n$  an $SO_n$-scheme.
We set $B=\{x\in \bb{A}^n\smallsetminus \{0\}\mid q(x)\neq 0\}$ and $C=\{x\in \bb{A}^n\smallsetminus\{ 0\}\mid q(x)=0\}$
where
$q(x)=\sum_{k=1}^nx_k^2$.
There is a stratification
$\{0\} \subset \{0\} \cup C  \subset \mathbb{A}^n$, and $\mathbb{A}^n
\smallsetminus (\{0\} \cup C) =B$.

Then we get two localization sequences:
\begin{equation}\label{eq:locseq1}
\rightarrow H_{SO_n}^{\ast-2n, \ast-n}(\{0\})
\xrightarrow{{s}_*}H_{SO_n}^{\ast,\ast}(\bb{A}^n)
\xrightarrow{t^*} H_{SO_n}^{\ast, \ast}(\bb{A}^n\smallsetminus \{0\})
\xrightarrow{}
\end{equation}
and
\begin{equation}\label{eq:locseq2}
\rightarrow H_{SO_n}^{\ast-2, \ast-1}(C)
\xrightarrow{{i}_*} H_{SO_n}^{\ast,\ast}
(\bb{A}^n\smallsetminus\{0\})\xrightarrow{j^*} H_{SO_n}^{\ast,\ast}(B)\xrightarrow{\delta}.
\end{equation}
We have $H_{SO_n}^{\ast, \ast}(\{0\})
= H_{SO_n}^{\ast, \ast}(\bb{A}^n)$ and 
${s}_*$ is the multiplication of the $n$-th Chern class 
$c_n\in H^{2n,n}_{SO_n}(\{0\})= H^{2n,n}(BSO_n)$
by the self-intersection formula~\cite{soulynin}.
Since in the end of proof  in \ref{subsec:cn}  we will see that 
$c_n \cup \ $ is injective,  we will denote 
\begin{equation}\label{eq:identify1}
H_{SO_n}^{\ast,\ast}
(\bb{A}^n\smallsetminus\{0\})=H_{SO_n}^{\ast,\ast}/c_n.
\end{equation}

We can define an action of $\bb{G}_m\times SO_n$ on $B$ 
such that $(t,g)\in \bb{G}_m\times SO_n$ sends $v\in B$ to 
$tg(v)$, that is, the matrix multiplication of scalar $t\in \bb{G}_m$ and $g(v)\in B$.
This action is transitive and the stabilizer of $e_1$ is a subgroup 
isomorphic to $O_{n-1}$.
The inclusion 
$\kappa_1 : O_{n-1} \to \mathbb{G}_m \times SO_n$
is given by $g\mapsto \left ( \det g ,  \kappa \right)$ where
$\kappa : O_{n-1} \to SO_n$ is 
\begin{equation}\label{eq:kappa}
g \mapsto \begin{pmatrix}\det(g)&0\\0&g\end{pmatrix}.
\end{equation}
We have
$
B\cong \bb{G}_m\times SO_n/O_{n-1}$.

Choose $E$ as in 1.1.
Then we obtain the following isomorphisms
\begin{align*}
H_{SO_n}^{\ast,\ast}(B) &= 
H^{\ast,\ast}\left(
\frac{E\times (\bb{G}_m\times SO_n/O_{n-1})}{SO_n}\right)\\
=
&H^{\ast,\ast}\left(\frac{E \times \bb{G}_m}{O_{n-1}}\right)
= H_{O_{n-1}}^{\ast, \ast}(\bb{G}_m).
\end{align*}

$\bb{G}_m$ is an $O_{n-1}$-scheme via $\det$, and 
$c_1(\gamma_n \otimes \bb{C})=c_1(\det (\gamma_n \otimes \bb{C}))$,
we get the identification similar to \eqref{eq:identify1}
\begin{equation}\label{eq:B}
H_{O_{n-1}}^{\ast,\ast}(\bb{G}_m)
= H_{O_{n-1}}^{\ast, \ast}/c_1
\end{equation}
for we will  see  in the next subsection that $c_1 \cup \ $ is a monomorphism.
Note that these are $H^{\ast, \ast}_{SO_n}$-module isomorphism
through $\kappa^*$.
$j^*$ in \eqref{eq:locseq2} is identified with the induced morphism 
of $\kappa^*$.

On the other hand, we know that $SO_n$ transitively acts on $C$ and
 the stabilizer of $e_{1}+\sqrt{-1}e_2$ is isomorphic to 
 a semi-direct product of the stabilizer of
a pair $e_{1}+\sqrt{-1}e_2, e_{1}-\sqrt{-1}e_2$ and a unipotent subgroup,
so that it is isomorphic to $SO_{n-2}\ltimes \bb{A}^{n-1} $~\cite[5.2]{m-v2006}.
Thus we get that $C\cong SO_n/(SO_{n-2}\ltimes \bb{A}^{n-1})$ and this deduces the following isomorphisms~\cite[2.3]{m-v2006}:
\begin{equation*}
H_{SO_n}^{\ast,\ast}(C)= H_{SO_{n-2}\ltimes \bb{A}^{n-1}}^{\ast,\ast}= H_{SO_{n-2}}^{\ast,\ast}.
\end{equation*}

From the above calculations we can rewrite \eqref{eq:locseq2} as follows:
\begin{equation}\label{eq:locseq2-1}
\rightarrow H_{SO_{n-2}}^{\ast-2,\ast-1}
\xrightarrow{{i}_*} H_{SO_n}^{\ast,\ast}/c_n
\xrightarrow{\kappa^*} H_{O_{n-1}}^{\ast,\ast}/c_1\xrightarrow{\delta}.
\end{equation}

%
We can regard the same stratification as that of $O_n$-spaces.
The induced long exact sequence is exactly the one used by Yagita.
We can connect two long exact sequences by the change of groups
homomorphism on $\epsilon : SO_n \to O_n$:
\begin{equation}\label{eq:locseq2-3}
\xymatrix@C-15pt{
 \ar[r] & H^{\ast-2,\ast-1}_{O_{n-2}} \ar[d]^{\epsilon^*}\ar[r]^-{{(i_0)}_*}&
H^{\ast,\ast}_{O_{n}}/c_{n} \ar[r]^-{j_0^*} \ar[d]^{\epsilon^*}
& H^{\ast, \ast}_{\mu_2 \times O_{n-1}} (\mathbb{G}_m)\ar[d]^{\epsilon^*}
\ar[r] & H^{\ast-1,\ast-1}_{O_{n-2}} \ar[r] \ar[d]^{\epsilon^*}& \\
 \ar[r] & H^{\ast-2,\ast-1}_{SO_{n-2}} \ar[r]^-{{i}_*}&
H^{\ast,\ast}_{SO_{n}}/c_n \ar[r]^-{\kappa^*}& H^{\ast ,\ast}_{O_{n-1}}(\bb{G}_m) 
\ar[r]^-{\delta} & H^{\ast-2,\ast-1}_{SO_{n-2}} \ar[r] \ar[r] & \;. }
\end{equation}

The standard inclusion $\mathbb{A}^{n} \to \mathbb{A}^{n+1}$
into the first $n$ components
induces the following commutative  long exact sequences 
(Gysin mappings and pullbacks commute in this case  ~\cite[2.3]{voe2003})
\begin{equation}\label{eq:locseq2-4}
\xymatrix@C-15pt{
 \ar[r] & H^{\ast-2,\ast-1}_{SO_{n-1}} \ar[d]^{\iota^*}\ar[r]^-{i_*}&
H^{\ast,\ast}_{SO_{n+1}}/c_{n+1} \ar[r]^-{\kappa^*} \ar[d]^{\iota^*}
& H^{\ast, \ast}_{O_{n}}/c_1\ar[d]^{\iota^*}
\ar[r]^{\delta} & H^{\ast-1,\ast-1}_{SO_{n-1}} \ar[r]\ar[d]^{\iota^*}& \\
 \ar[r] & H^{\ast-2,\ast-1}_{SO_{n-2}} \ar[r]^-{{i}_*}&
H^{\ast,\ast}_{SO_{n}}/c_n \ar[r]^-{\kappa^*}& H^{\ast ,\ast}_{O_{n-1}}/c_1
\ar[r]^-{\delta} & H^{\ast-1,\ast-1}_{SO_{n-2}} \ar[r] & \; .}
\end{equation}

Later we use another inclusion
$\mathbb{A}^{n} \to \mathbb{A}^{n+1}$
into the second to $n+1$-th components.
It is conjugate to the composition of the standard inclusions by an element of $SO_{n+1}(\bb{C})$,
so that the induced morphisms in motivic cohomology  are the same (1.1).

\subsection{$H^{*,*}(BO_n)$}
It is well-known that the map
induced from the restriction to the diagonal subgroup
\begin{equation*}
H^*(BO_n)\rightarrow H^*((B\mu_2)^n)^{\Sigma_n}=\Z/2[x_1,\ldots, x_n]^{\Sigma_n}
\end{equation*}
is an isomorphism, where $\Sigma_n$ is the $n$-th symmetric group
acting by permutations on $x_i$'s.
We need three basis of the symmetric polynomials.
Recall that for any partition $\lambda=[i_1, i_2, \ldots ,i_k,0, \ldots,0 ]
=[i_1, i_2, \ldots ,i_k,  0^{n-k} ]$ of $n$
where $\sum i_j = n$ and $i_{j} \neq 0$, $k$ is called the length of $\lambda$.
For each partition $\lambda$
we can define a symmetric polynomial $m[\lambda]=\sum x_1^{i_1}x_2^{i_2}\ldots x_k^{i_k}$
where the sum is taken on the orbit of $\Sigma_n$.
These make the monomial basis of symmetric polynomials.

The $l$-th Stiefel-Whitney class $w_l$ corresponds to
the elementary symmetric polynomial of degree $l$ :
$w_l=m[1,1,\ldots,1]=m[1^l, 0^{n-l}]$, whose monomials
gives a basis of
$
H^*(BO_n)= \bb{Z}/2[w_1,\ldots,w_n].
$
 
Finally we need a basis suitable to see action of $Q$'s.
It is due to W.~S.~Wilson.
We denote by $Q(i)M$ the $\Z/2$-module generated by 
$Q_{k_1}\ldots Q_{k_j} m$ for all $0\leq k_1 < \ldots <k_j \leq i$ and $m \in M$.
Then \begin{equation}\label{eq:wildecomp}
H^*(BO_n)=\bigoplus_{k\geq -1}Q(k)G_k,
\end{equation}
here $G_{k-1}$ is
the vector space spanned by  monomial bases $m[{\lambda}]$ where
$\lambda= [2s_1+1, 2s_2+1, \ldots, 2s_k+1, {2t_1}, {2t_2}, \ldots ,{2t_q}]$
for some $k+q\leq n \; (0\leq k, q \leq n)$ and
$0\leq s_1\leq s_2\leq\ldots\leq s_k,\ 0< t_1\leq t_2\leq\ldots\leq t_q$ such that
if the number of $t$ equal to $t_u$ is odd, then there exists some $v\leq k$ such that $2s_v+2^v<2t_u<2s_v+2^{v+1}$
(\cite[2.1]{wil1984},
while the details of the combinatorial condition here 
are not necessary in our proof.).

Recall $Q_k x_i^{2j}=0$ and $Q_k x_i^{2j+1}=x_i^{2j+2k}$ in $H^*((B\mu_2)^n)$.
Any element of $H^*(BO_n)$ is a sum of
$Q_{i_1}\ldots Q_{i_s} m_{\lambda}$ for some such $\lambda$ and $0 \leq i_1 < \ldots < i_s \leq k$.
Note $Q_{i_1}\ldots Q_{i_s} m[{\lambda}]=m[2s_1+2i_1, 2s_2+2i_2, \ldots, 2s_k+2i_s, {2t_1}, {2t_2}, \ldots ,{2t_q}]$,
which is not zero by the condition on the partition.
For example $w_l$ is an element of the basis
and
$Q_0 w_l =m[2,1^{l-1}],\,Q_0 Q_0 w_l =m[2,2,1^{l-2}]+m[2,2,1^{l-2}] = 0,\,
Q_0Q_1 w_l =m[2,4, 1^{l-2}]$.

Yagita determined the 
motivic cohomology $H^{\ast , \ast}(BO_n)$.
He proved inductively that $t_{BO_n}$ 
is injective  by the stratified method~\cite[8.2]{yag2010}.
Then the description of  the filtration by weight on 
$H^{\ast}(BO_n)$ directly connected  to 
$H^{\ast , \ast}(BO_n)$ through
Milnor operations (1.5).
The result is the following:
let $(\mu_2)^n$ be the diagonal subgroup of $O_n$.
Then the restriction induces an isomorphism~\cite[8.1]{yag2010}
\begin{equation}
H^{\ast , \ast}(BO_n)=(\otimes^n H^{\ast , \ast}(B \mu_2))^{\Sigma_n}
=\Z/2[\tau]\otimes(\oplus Q(k)G_k)
\end{equation}
where $G_k$ is 
spanned by the monomial 
bases
exactly the same conditions in  \eqref{eq:wildecomp} but $x_i \in H^{1,1}$.

\begin{theorem}[{Yagita~\cite[8.1]{yag2010}}]\label{prop:yag}
$x\in H^*(BO_n)$ has weight $k$ if and only if $k$ is the maximal number such that $Q_{i_1}Q_{i_2}\ldots Q_{i_k}x\neq 0$ for some tuple 
$(i_1,i_2,\ldots,i_k)$.

In particular $m_{\lambda}$ where
$\lambda=[2s_1+1 , \ldots , 2s_{k}+1, {2t_1}, \ldots ,{2t_q}]$
has weight $k$, and
$x$ has weight $0$ if and only if $Q_ix=0$ for every $i$.
\end{theorem}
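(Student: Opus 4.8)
The plan is to read off the statement from the structure theorem $H^{\ast,\ast}(BO_n)=\Z/2[\tau]\otimes A$, $A:=\oplus_k Q(k)G_k$, together with the injectivity of $t_{BO_n}$ and the fact (1.5) that each $Q_i$ lowers weight by exactly one when it is nonzero. First I would describe the weight filtration explicitly on the monomial basis. Since $x_i\in H^{1,1}$ and $x_i^2=\tau y_i$, an odd power $x_i^{2s+1}=\tau^s y_i^s x_i$ carries weight $1$ and an even power $x_i^{2t}=\tau^t y_i^t$ carries weight $0$; stripping the common factor $\tau^{\sum\lfloor\lambda_r/2\rfloor}$ shows that the $\tau$-free lift $\overline{m_\lambda}$ of a monomial with exactly $k$ odd parts sits in a bidegree of motivic weight $k$. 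Because $t_{BO_n}$ is injective, $H^{\ast,\ast}(BO_n)$ is $\tau$-torsion free and $t$ identifies the motivic-weight grading of $A$ with the associated graded of the weight filtration; hence the weight of $x=\sum_\lambda c_\lambda m_\lambda$ equals $\max\{\,\nu(\lambda):c_\lambda\neq0\,\}$, where $\nu(\lambda)$ is the number of odd parts of $\lambda$. This already gives the ``in particular'' clause for $m_\lambda$.

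Next I would bound the length of a nonzero chain of operations by the weight. If $Q_{i_1}\cdots Q_{i_\ell}x\neq0$, then every intermediate class is nonzero, and $Q_i$ (which agrees with the motivic operation under the identification above, being $\Z/2[\tau]$-linear) lowers the weight by exactly one at each step; since there are no negative weights the final class has weight $\mathrm{wt}(x)-\ell\geq0$, so $\ell\leq\mathrm{wt}(x)$.

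For the opposite inequality I would induct on $\mathrm{wt}(x)$, reducing everything to the single implication
\[
\mathrm{wt}(x)\geq1\ \Longrightarrow\ Q_i x\neq0\ \text{for some }i.
\]
Given such an $i$, the class $Q_ix$ has weight $\mathrm{wt}(x)-1$, and any length-$(\mathrm{wt}(x)-1)$ witness for $Q_ix$ extends to a length-$\mathrm{wt}(x)$ witness for $x$; the index $i$ is automatically distinct from the later ones because $Q_i^2=0$. The same implication supplies the nontrivial half of the weight-zero statement, the converse being immediate since a weight-zero class is a sum of all-even monomials and every $Q_i$ kills an even power.

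The displayed implication is the crux and the step I expect to be the main obstacle. Let $\overline{x}_W$ be the top-weight part of $x$, a nonzero sum of monomials $m_\lambda$ each with exactly $W=\mathrm{wt}(x)\geq1$ odd parts; it is enough to produce one $i$ with $Q_i\overline{x}_W\neq0$, as this is then the weight-$(W-1)$ part of $Q_ix$. I would argue by a leading term. Recall that $Q_i$ turns an odd part $2a+1$ into the even part $2a+2^{i+1}$, and that $Q_i m_\lambda=\sum_\mu m_\mu$ with every coefficient equal to $1$ (the monomial-symmetric normalisation absorbs the multiplicity of the converted part, as one checks directly). Choose $i$ so large that $2^{i+1}$ exceeds all exponents present, let $2a_{\max}+1$ be the largest odd part occurring in $\overline{x}_W$, and let $\lambda^\ast$ be the reverse-lexicographically largest partition realising it. Converting that part in $\lambda^\ast$ produces the uniquely largest part $2a_{\max}+2^{i+1}$, which can arise only from $\lambda^\ast$ and only by converting a part of value $2a_{\max}+1$; hence the corresponding monomial survives in $Q_i\overline{x}_W$ with coefficient $c_{\lambda^\ast}\neq0$, so $Q_i\overline{x}_W\neq0$. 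The entire difficulty is thus concentrated in this no-cancellation bookkeeping — verifying the unit coefficients and the uniqueness of the top term — while, as the authors note, the fine combinatorial condition in Wilson's decomposition is never needed.
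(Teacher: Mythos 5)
Your proof is correct, but it is worth saying that the paper itself contains no proof of this statement: it is quoted from Yagita~\cite[8.1]{yag2010}, and the argument implicit in that reference reads the weight filtration off Wilson's decomposition $\bigoplus_k Q(k)G_k$, whose basis elements $Q_{k_1}\cdots Q_{k_j}m[\lambda]$ are nonzero monomial symmetric functions precisely because of the combinatorial condition recorded in \eqref{eq:wildecomp}. You take a genuinely different and more elementary route: you use only the first half of the displayed structure isomorphism, $H^{\ast,\ast}(BO_n)=(\otimes^n H^{\ast,\ast}(B\mu_2))^{\Sigma_n}$, together with injectivity of $t_{BO_n}$; freeness over $\Z/2[\tau]$ on the lifts $\overline{m[\lambda]}$ identifies the weight of an arbitrary class with the maximal number of odd parts among its monomials, the bound on the length of a nonzero $Q$-chain follows since each $Q_i$ converts one odd part to an even one (here your phrase ``lowers the weight by exactly one'' should read ``by at least one'', since cancellation can kill top-weight terms, but the inequality is all you use), and everything reduces to your displayed implication, which you settle by the large-$2^{i+1}$ leading-term argument. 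That argument is sound: once $2^{i+1}$ exceeds every exponent present, the converted part $2a_{\max}+2^{i+1}$ is strictly larger than all others, the conversion $\lambda\mapsto\mu$ is reversible, and the multiplicity bookkeeping gives unit coefficients, so $m_{\mu^\ast}$ survives with coefficient $c_{\lambda^\ast}\neq 0$. What your route buys is a self-contained proof that, as the authors themselves remark, never invokes Wilson's combinatorial condition, and it avoids the near-circularity of deducing the weight statement from the $Q(k)G_k$ decomposition, whose nondegeneracy (the nonvanishing of those $Q$-chains) is essentially the content being proved; what Yagita's route buys is the decomposition itself, which the paper needs elsewhere in any case.
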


\begin{cor}\label{cor:onwt}
The Stiefel-Whitney class $w_l$ in $H^*(BO_n)$ has weight 
$l$ for $l=1,\ldots, n$.
\end{cor}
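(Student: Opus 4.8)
The plan is to deduce the corollary directly from the ``In particular'' clause of Theorem~\ref{prop:yag}. First I would translate $w_l$ across the restriction isomorphism $H^*(BO_n)\cong\Z/2[x_1,\ldots,x_n]^{\Sigma_n}$: there $w_l$ is the $l$-th elementary symmetric polynomial, that is, the monomial symmetric function $m[\lambda]$ for the partition $\lambda=[1^l,0^{n-l}]$, the sum over the $\Sigma_n$-orbit of $x_1\cdots x_l$.

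Next I would match $\lambda=[1^l]$ to the shape $[2s_1+1,\ldots,2s_k+1,2t_1,\ldots,2t_q]$ demanded by the theorem. Writing each part as $1=2\cdot 0+1$ forces $s_1=\cdots=s_l=0$, so there are $k=l$ odd parts and $q=0$ even parts. As $q=0$ there are no $t_u$, so the combinatorial constraint relating the $t$'s to the $s$'s is vacuous, and $[1^l]$ is a bona fide member of the Wilson basis in \eqref{eq:wildecomp}, lying in $G_{l-1}$. Applying Theorem~\ref{prop:yag}, the weight of $m[\lambda]$ is the number $k$ of odd parts, which is $l$; hence $w_l$ has weight $l$ for each $l=1,\ldots,n$.

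I expect no real obstacle, since both the lower and the upper bound on the weight are already packaged inside Yagita's theorem. If one wanted a self-contained check, the upper bound $\mathrm{weight}\le l$ is immediate because $w_l$ lives in $H^{l,l}(BO_n)$, so $j=l$ is permissible in the definition of the weight filtration. The lower bound follows from a short computation: using that each $Q_k$ is a derivation killing even powers with $Q_k x_i=x_i^{2^{k+1}}$, distinct operators must act on distinct variables, whence $Q_0Q_1\cdots Q_{l-1}w_l=m[2,4,\ldots,2^l,0^{n-l}]$, which is nonzero (each target monomial comes from a unique support and assignment of the distinct exponents $2,4,\ldots,2^l$, so its mod-$2$ multiplicity is $1$). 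Thus some length-$l$ Milnor composite survives. The one point worth stating explicitly is that these two bounds together force the weight to be exactly $l$ rather than merely at most $l$, which is precisely the equivalence supplied by Theorem~\ref{prop:yag}.
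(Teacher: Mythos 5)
Your proposal is correct and takes essentially the same route as the paper: the corollary is deduced immediately from Theorem~\ref{prop:yag}, since $w_l=m[1^l,0^{n-l}]$ is a monomial symmetric function with $k=l$ odd parts and $q=0$ even parts (so the Wilson-basis condition is vacuous), hence has weight $l$. Your supplementary self-contained check via $Q_0Q_1\cdots Q_{l-1}w_l=m[2,4,\ldots,2^l,0^{n-l}]\neq 0$ is sound but not needed, as it merely re-derives the content of the theorem in this special case.
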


Since $\iota^*$ discards
monomials of length exactly $n$,
the following corollary is obvious.

\begin{cor}
The mapping induced from the canonical restriction
$\iota^* : H^{*,*}(BO_n) \to H^{*,*}(BO_{n-1})$ is an
epimorphism.
\end{cor}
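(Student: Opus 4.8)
The plan is to reduce the statement to the corresponding combinatorial fact about monomial symmetric functions via the diagonal-subgroup description, and then transport it to the bigraded setting using Yagita's structure theorem together with the compatibility of $\iota^*$ with the operations $Q_i$.

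First I would identify the topological restriction explicitly. Under $H^*(BO_n)=\Z/2[x_1,\ldots,x_n]^{\Sigma_n}$, the inclusion $\iota:(\mu_2)^{n-1}\subset(\mu_2)^n$ of diagonal subgroups shows that $\iota^*:H^*(BO_n)\to H^*(BO_{n-1})$ is the substitution $x_n\mapsto 0$. On the monomial basis this sends $m[\lambda]$ to $m[\lambda]$ in the variables $x_1,\ldots,x_{n-1}$ whenever the length of $\lambda$ is at most $n-1$, and to $0$ when the length is exactly $n$, since in the latter case every monomial occurring in $m[\lambda]$ involves all $n$ variables and is therefore killed. Hence $\iota^*$ carries the monomial basis of $H^*(BO_n)$ onto that of $H^*(BO_{n-1})$, discarding precisely the monomials of length $n$; in particular the topological map is already an epimorphism.

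Next I would upgrade to $H^{*,*}$. By the structure theorem, $H^{*,*}(BO_n)=\Z/2[\tau]\otimes(\bigoplus_k Q(k)G_k)$ with each $G_k$ spanned by monomial bases $m[\lambda]$ (now with $x_i\in H^{1,1}$), so that a $\Z/2[\tau]$-basis of $H^{*,*}(BO_n)$ is given by the classes $Q_{i_1}\cdots Q_{i_j}m[\lambda]$. Since $\iota^*$ is a homomorphism of $\Z/2[\tau]$-algebras and commutes with each $Q_i$ (1.5), we have $\iota^*(Q_{i_1}\cdots Q_{i_j}m[\lambda])=Q_{i_1}\cdots Q_{i_j}(\iota^* m[\lambda])$, where $\iota^* m[\lambda]$ is the same $m[\lambda]$ in $n-1$ variables or $0$, exactly as in the topological computation. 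Thus the image of this $\Z/2[\tau]$-basis contains every generator $Q_{i_1}\cdots Q_{i_j}m[\mu]$ of $H^{*,*}(BO_{n-1})$, the partitions $\mu$ of length at most $n-1$ being precisely those spanning the corresponding $G_\bullet$ for $BO_{n-1}$. Surjectivity of $\iota^*$ follows.

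I do not anticipate a genuine obstacle; the only point deserving care is to confirm that at the motivic level $\iota^*$ is indeed the substitution $x_n\mapsto 0$ and respects the decomposition $\bigoplus_k Q(k)G_k$, which is immediate from the compatibility of $\iota^*$ with restriction to the diagonal subgroup and with the operations $Q_i$. This is exactly the content of the remark preceding the statement.
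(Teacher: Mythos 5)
Your proof is correct and takes essentially the same approach as the paper: the paper's one-line argument likewise uses the diagonal-subgroup/monomial-basis description of $H^{*,*}(BO_n)$ from Yagita's structure theorem and observes that $\iota^*$ merely discards the monomial basis elements $m[\lambda]$ of length exactly $n$, so surjectivity is immediate. Your extra steps (commutation with the $Q_i$ and $\Z/2[\tau]$-linearity) just make explicit what the paper treats as obvious.
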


\section{$H^{\ast}(BSO_{n})$ and $H^{*,*}(BSO_{2m+1})$}
To determine $H^{\ast,\ast}(BSO_{n})$ we can not use 
the restriction to the diagonal subgroup. 
The induced mapping on motivic cohomology is neither injective nor surjective.
We will use the exact sequences \eqref{eq:locseq1} and \eqref{eq:locseq2-1}.
For effective use of them we need to determine the coniveau filtration
on $H^{\ast}(BSO_{n})$.



\subsection{Determination of weight}
We will write down $\kappa^*$ in terms of Stiefel-Whitney classes.
Recall $\gamma_n$ is the canonical $n$-dimensional real
vector bundle over $BO_n(\mathbb{R})$.
We have the following isomorphism of vector bundles:
\begin{equation*}
\kappa^{-1}\epsilon^{-1} \gamma_n =\det(\gamma_{n-1})\oplus\gamma_{n-1}
\end{equation*}
Thus the total Stiefel-Whitney class of $\kappa^{-1}\epsilon^{-1} \gamma_n $ is
\begin{align}
&1+w_1(\kappa^{-1}\epsilon^{-1} \gamma_n )+w_2(\kappa^{-1}\epsilon^{-1} \gamma_n )+\cdots+w_n(\kappa^{-1}\epsilon^{-1} \gamma_n )\nonumber\\
=&1+w_1(\det\gamma_{n-1}\oplus \gamma_{n-1})+\cdots+w_n((\det\gamma_{n-1}\oplus \gamma_{n-1})\nonumber\\
=&(1+w_1)(1+w_1+\cdots+w_{n-1})\nonumber\\
=&1+(w_2+w_1^2)+(w_3+w_1w_2)+\cdots+(w_{n-1}+w_1w_{n-2})+w_1w_{n-1}.\label{eq:kapparep}
\end{align}

We will prove that the mapping $t(\kappa^*)\colon H^*(BSO_n)\rightarrow H^*(BO_{n-1})$  preserves weight strictly.
We recall that a filtered mapping 
$\phi\colon H^*(X)\rightarrow H^*(Y)$ strictly preserves 
filtrations if $F^i H^*(X) = \phi^{-1}(F^i H^*(Y))$.

To see first $t(\kappa^*)$ is injective, we recall $\Ker t(\kappa^*)\subset \Ker t(\iota^*)=(w_n)$.
Suppose $0\neq x\in H_{SO_n}^*$ is in $\Ker t(\kappa^*)$.
It is divisible by $w_n$, and there exists $x_j$
such that $x=w_n^jx_j$ and $x_j$ is not divisible by $w_n$.
The image $t(\kappa^*) x= w_{n-1}^j w_1^j \kappa^*x_j$ is zero,
so $t(\kappa^*)x_j$ is zero, which is contradiction.

Let $x$ be an element of $H^*(BSO_n)$ such that
$t(\kappa^*)x$ is contained in the weight $0$ part of $H^*(BO_{n-1})$.
We can write $x=P(w_i)$ with a polynomial of $w_i$.
$t(\kappa^*) x=P(t(\kappa^*) x)$ is a polynomial of $c_i$,
so that each  term will be a square of a monomial of $w_i$.
By \eqref{eq:kapparep} it is possible, by induction on lexicographic order on monomials,
only if $P$ is a square.
We have proved that $t(\kappa^*)$
is strict on the weight $0$ part.

If $y=t(\kappa^*)x$ has weight $k > 0$, we switch to the
motivic cohomology.
Then we can find $\overline{x} \in H_{SO_n}^{*,*}$ 
and $\overline{y}\in H_{O_{n-1}}^{*,*}$ such that 
$\kappa^*\overline{x}=\tau^f\overline{y}$ for some $f\in \bb{Z}_{\geq 0}$.
We have that
\begin{equation*}
\kappa^*Q_{j_0}\ldots Q_{j_{k-1}}\overline{x}=\tau^fQ_{j_0}\ldots Q_{j_{k-1}}\overline{y}.
\end{equation*}
for some $\{j_0,\ldots,j_{k-	1}\}\subset\{0,\ldots,n-2\}$ and
$Q_{j_0}\ldots Q_{j_{k-1}}\overline{y}$ is a non-zero element in $H^{2h,h}$
for it has zero image by all $Q_i$.
It means $f=0$ and $x$ has weight equal to $k$ by the theorem of Yagita.
Finally $t(\kappa^*) c_n= c_1 c_{n-1}$
implies the following.
\begin{prop}\label{prop:kpwt}
The mapping
$t(\kappa^*)\colon H^*_{SO_n}/c_n \rightarrow H^*_{O_{n-1}}/c_1$ 
is injective and it preserves weight strictly.
\end{prop}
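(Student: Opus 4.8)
The plan is to make the map completely explicit through \eqref{eq:kapparep}: on $H^*_{SO_n}=\Z/2[w_2,\ldots,w_n]$ the homomorphism $t(\kappa^*)$ sends $w_l\mapsto w_l+w_1w_{l-1}$ for $2\le l\le n-1$ and $w_n\mapsto w_1w_{n-1}$, with target $H^*_{O_{n-1}}=\Z/2[w_1,\ldots,w_{n-1}]$. Since every $t(f^*)$ is a filtered homomorphism it can never raise weight, so the content of strictness is the reverse: I must show $t(\kappa^*)$ does not lower weight, i.e.\ that it preserves weight exactly. I would establish injectivity first, as it is both cleaner and the engine behind the strictness argument.

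For injectivity I would use that restricting $\kappa$ to $SO_{n-1}\subset O_{n-1}$ recovers $\iota$, so $\Ker t(\kappa^*)\subseteq\Ker t(\iota^*)=(w_n)$. Given $0\neq x\in\Ker t(\kappa^*)$, factor $x=w_n^{\,j}x_j$ with $j\ge1$ and $w_n\nmid x_j$. Then $t(\kappa^*)x=(w_1w_{n-1})^{\,j}\,t(\kappa^*)x_j$ vanishes in the domain $\Z/2[w_1,\ldots,w_{n-1}]$, so $t(\kappa^*)x_j=0$; but then $x_j\in\Ker t(\kappa^*)\subseteq(w_n)$, contradicting $w_n\nmid x_j$.

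The organizing idea for strictness is that, being natural, $t(\kappa^*)$ commutes with every classical $Q_i$, and being injective it reflects their nonvanishing: $Q_{i_1}\cdots Q_{i_s}x\neq0$ exactly when $Q_{i_1}\cdots Q_{i_s}\,t(\kappa^*)x=t(\kappa^*)\,Q_{i_1}\cdots Q_{i_s}x\neq0$. Hence the maximal length of a nonvanishing $Q$-string is the same for $x$ and for its image, and by Yagita's Theorem~\ref{prop:yag} applied on the target this common length equals $\mathrm{weight}(t(\kappa^*)x)$; strictness thus reduces to the $BSO_n$-analogue of that theorem, namely that $\mathrm{weight}(x)$ equals the same length. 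In weight $0$ I would argue directly: writing $x=P(w_i)$, the image $P(w_l+w_1w_{l-1})$ is a polynomial in the $c_i=w_i^2$, hence a square, and a leading-monomial induction in lexicographic order shows this forces $P$ itself to be a square, so $\mathrm{weight}(x)=0$. In positive weight I would pass to motivic cohomology: with weight-realizing lifts $\overline{x},\overline{y}$ and $\kappa^*\overline{x}=\tau^f\overline{y}$, the length-$k$ string detecting $\mathrm{weight}(\overline{y})=k$ transports, by $\tau$-linearity of the $Q_i$, to $\kappa^*(Q_{j_0}\cdots Q_{j_{k-1}}\overline{x})=\tau^f\,Q_{j_0}\cdots Q_{j_{k-1}}\overline{y}$ with nonzero weight-zero right-hand side, so $\mathrm{weight}(x)\ge k$ and a bidegree comparison gives $\mathrm{weight}(x)=k+2f$. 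The main obstacle is exactly forcing $f=0$: one cannot here invoke injectivity of the realization on the source, since $H^{*,*}(BSO_n)$ genuinely carries $\tau$-torsion---the phenomenon this paper exists to study---so the weight of a class in $H^*(BSO_n)$ is not detected by realization alone, and ruling out a weight drop is the delicate point.

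Finally, the computation $t(\kappa^*)c_n=\kappa^*(w_n^2)=w_1^2w_{n-1}^2=c_1c_{n-1}$ shows $t(\kappa^*)$ carries $(c_n)$ into $(c_1)$, so it descends to $H^*_{SO_n}/c_n\to H^*_{O_{n-1}}/c_1$; injectivity and strictness pass to the quotient because $c_n\mapsto c_1c_{n-1}$ is precisely the relation imposed on the target, which is what puts \eqref{eq:locseq2-1} in its stated form.
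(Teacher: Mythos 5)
Your injectivity argument is the paper's own, step for step: the reduction $\Ker t(\kappa^*)\subseteq\Ker t(\iota^*)=(w_n)$, the factorization $x=w_n^{\,j}x_j$ with $w_n\nmid x_j$, and the contradiction (you in fact spell out the final contradiction slightly more carefully than the paper does). Your weight-zero case is also the paper's: the image of $P(w_i)$ is a polynomial in the $c_i$, hence each term a square, and a lexicographic induction forces $P$ to be a square. Your setup in positive weight is again exactly the paper's: minimal lifts $\overline{x},\overline{y}$, the relation $\kappa^*\overline{x}=\tau^f\overline{y}$, and the transported equation $\kappa^*(Q_{j_0}\cdots Q_{j_{k-1}}\overline{x})=\tau^f Q_{j_0}\cdots Q_{j_{k-1}}\overline{y}$ whose right-hand side is a nonzero class of weight zero.

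But at that point you stop: you declare that forcing $f=0$ is ``the main obstacle'' and ``the delicate point,'' and offer no argument for it. That is not a detail left to the reader; it is the proposition. Everything you establish in positive weight (that the weight of $x$ is $\geq k$ and equals $k+2f$) is the easy direction, which already follows from $t(\kappa^*)$ being a filtered map; strictness is precisely the claim $f=0$. Without it you have strictness only on the weight-zero part, and the paper's applications collapse: Corollary 2.2(2), that $w_{2m}$ has weight $2m-2$ in $H^*(BSO_{2m})$ (equivalently, that $w_{2m}$ lifts to $H^{2m,2m-1}$), is exactly an instance of positive-weight strictness, and it is what drives the identification of $\Coker\iota^*_{2m+1}$ and of $Y_{m+1}$ in Section 3. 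The paper does commit to closing this step: having chosen the string so that $Q_{j_0}\cdots Q_{j_{k-1}}\overline{y}$ is annihilated by every $Q_i$, it invokes Yagita's theorem to place that class as a nonzero element of $H^{2h,h}$, and from the displayed equation concludes $f=0$ and $\mathrm{weight}(x)=k$. The paper's justification here is admittedly terse, but note its logical shape: the equation and the conclusion live entirely in $H^{*,*}_{O_{n-1}}$, which is $\tau$-torsion free, so the $\tau$-torsion of the source that you cite as the obstruction never enters. Your diagnosis that realization is not injective on $H^{*,*}(BSO_n)$ is true, but it only explains why the argument must be finished on the target side, as the paper does; it is not evidence that the step cannot be done, and an attempt that ends by naming the remaining step as an obstacle is an incomplete proof. (A smaller gap of the same kind: your last paragraph asserts that injectivity ``passes to the quotient'' $H^*_{SO_n}/c_n\to H^*_{O_{n-1}}/c_1$; injectivity of a ring homomorphism does not in general descend to quotients, so one still needs the short check that $t(\kappa^*)x\in(c_1)$ forces $x\in(c_n)$.)
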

\begin{cor}
\begin{enumerate}[(1)]
\item \ For odd $n$, the Stiefel-Whitney class $w_l$ (~$2\leq l\leq n$) in $H^*(BSO_n)$ has  weight $l$ if $l$ is even, and $l-2$ if $l$ is odd.
\item \ For even $n$, the Stiefel-Whitney class $w_n$ has weight $n-2$
in $H^*(BSO_n)$.
For other $w_l$ ($2\leq l\leq n$),
$w_l$ has weight $l$ if $l$ is even, and $l-2$ if $l$ is odd.
\item \ $Q_0w_{2l}=w_{2l+1}$ for all $ 1 \leq l \leq n/2$,
where  we regard $w_{n+1}=0$.
\end{enumerate}
\end{cor}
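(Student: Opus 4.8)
The three parts call for different tools, and I would keep them separate. Part (3) is a Wu--formula computation: $Q_0$ is the Bockstein $Sq^1$, a derivation with $Sq^1 x_i = x_i^2$ on $H^*((B\mu_2)^n)$, so the splitting principle gives $Sq^1 w_j = w_1 w_j + (j+1)w_{j+1}$ in $H^*(BO_n)$ (one checks this directly on $w_j = e_j$). Applying $\epsilon^*$ and using $\epsilon^* w_1 = 0$ yields $Q_0 w_j = (j+1)w_{j+1}$ in $H^*(BSO_n)$, whence $Q_0 w_{2l} = w_{2l+1}$ and $Q_0 w_{2l+1} = 0$, with the convention $w_{n+1} = 0$. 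That is part (3), and I would prove it first because it drives the rest.

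For the weights I would exploit $\epsilon^*\colon H^{*,*}(BO_n)\to H^{*,*}(BSO_n)$: it commutes with the $Q_i$, sends $w_l \mapsto w_l$ for $l\geq 2$ with kernel $(w_1)$, and $t(\epsilon^*)$ cannot raise weight, so Corollary \ref{cor:onwt} gives $\mathrm{weight}(w_l)\leq l$ at once. For an even class $w_{2l}$ with $2l<n$ I would produce the matching lower bound by applying the $2l$ distinct operations $Q_0 Q_1\cdots Q_{2l-1}$; in $H^*(BO_n)$ this gives $m[2,4,\ldots,2^{2l}]$, and since the weight of a class is at least the number of successive Milnor operations one can apply before reaching zero (each drops the weight by one), it suffices that $\epsilon^* m[2,4,\ldots,2^{2l}]\neq 0$, i.e. that $m[2,4,\ldots,2^{2l}]\notin (w_1)$. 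This forces $\mathrm{weight}(w_{2l}) = 2l$. The odd classes are then free: for odd $l\leq n$ the class $w_{l-1}$ is even with $l-1<n$, hence of weight $l-1$, and $w_l = Q_0 w_{l-1}\neq 0$ by part (3); as $Q_0$ lowers weight by exactly one this gives $\mathrm{weight}(w_l) = l-2$, which is the odd statement of both (1) and (2).

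The genuinely exceptional case is the top class $w_n$ with $n$ even, which should have weight $n-2$ rather than $n$. Here the upper bound is where I would finally invoke Proposition \ref{prop:kpwt}: the weight of $w_n$ in $H^*(BSO_n)$ equals its weight in $H^*_{SO_n}/c_n$ (no class of degree $n$ is altered modulo the degree-$\geq 2n$ ideal $(c_n)$), by strictness this equals the weight of $t(\kappa^*)w_n$ in $H^*_{O_{n-1}}/c_1$, and a weight in the quotient is at most the weight of any honest lift. By \eqref{eq:kapparep} one such lift is $w_1 w_{n-1} = m[2,1^{n-2}]$, of weight $n-2$ by Theorem \ref{prop:yag}; hence $\mathrm{weight}(w_n)\leq n-2$. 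The lower bound again comes from $\epsilon^*$: applying $n-2$ operations to $w_n = m[1^n]$ leaves a partition $m[1^2,2^{b_1},\ldots,2^{b_{n-2}}]$, and I would check that this lies outside $(w_1)$ for a suitable choice of the $b_i$.

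The crux, and the step I expect to be the real obstacle, is the family of divisibility questions ``$m[\lambda]\in (w_1)$?'' for the partitions produced by iterated $Q$'s, whose parts are distinct powers of $2$ (in the $w_n$ lower bound, with one repeated $1$). The mechanism I would use is that over $\mathbb{F}_2$ a monomial symmetric function whose number of distinct powers-of-two parts equals the number $N$ of variables is the Moore determinant $\det(x_j^{2^{a_i}})$, which factors into the nonzero $\mathbb{F}_2$-linear forms and so has $w_1 = \sum x_i$ as a factor; this is precisely why all $(n-1)$- and $n$-fold operations on $w_n$ vanish, consistent with the upper bound $n-2$ above. When the number of parts is strictly smaller than $N$ the function is only a sum of such minors and is generically not divisible by $w_1$, giving the non-vanishings needed in the lower bounds; the repeated-$1$ partition for $w_n$ is the one case that is not a single Moore determinant, and there I would argue by direct evaluation on the hyperplane $\{w_1=0\}$ (or reduce every even class to the single base case $n=2l+1$ via the weight-monotone map $\iota^*$, leaving only the $k=N-1$ computation). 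Establishing these non-divisibilities cleanly is the main work.
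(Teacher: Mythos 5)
Your part (3) is correct and complete (Wu formula plus $\epsilon^*w_1=0$), and your upper bounds are sound: pullback along $\epsilon$ cannot raise weight, so Corollary \ref{cor:onwt} gives $\mathrm{wt}(w_l)\le l$, and your use of Proposition \ref{prop:kpwt} with the representative $w_1w_{n-1}=m[2,1^{n-2}]$ gives $\mathrm{wt}(w_n)\le n-2$. Two small repairs are needed along the way: ``$Q_0$ lowers weight by exactly one'' is not automatic (only $\mathrm{wt}(Q_0x)\le \mathrm{wt}(x)-1$ comes for free, though your odd case can be patched, since $Q_1\cdots Q_{l-2}w_l=Q_1\cdots Q_{l-2}Q_0w_{l-1}$ is covered by the even lower bound); and the principle ``weight is at least the number of applicable $Q$'s'' should be justified --- it does hold for every $X$, by lifting to motivic cohomology and using $H^{i,j}=0$ for $2j<i$, and it is the half of Theorem \ref{prop:yag} that is not special to $BO_n$.

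The genuine gap is the one you flag yourself: every lower bound in (1) and (2) has been reduced to a non-divisibility assertion $m[\lambda]\notin(w_1)\subset H^*(BO_n)$ for partitions $\lambda$ with distinct $2$-power parts (plus two $1$'s in the $w_n$ case), and none of these assertions is proved. ``Generically not divisible,'' ``I would check,'' and the Moore-determinant heuristic do not decide any particular case, and such questions are genuinely delicate: full-length $2$-power partitions are divisible by $w_1$, indeed sometimes by $w_1^2$ (e.g.\ $m[2,4,\dots,2^{N}]$ in $N$ variables is the \emph{square} of the product of all nonzero $\mathbb{F}_2$-linear forms), so which near-full partitions survive modulo $(w_1)$ depends on the exponents and requires an actual argument. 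Without these facts you have only the inequalities, not the corollary. This is also precisely where your route diverges from the paper's, at a real cost: the paper never poses a weight question inside $H^*(BSO_n)=H^*(BO_n)/(w_1)$, where Yagita's criterion is unavailable and divisibility problems arise. Instead, Proposition \ref{prop:kpwt} (injectivity and strictness of $t(\kappa^*)$) transports every case into $H^*(BO_{n-1})$, where \eqref{eq:kapparep} gives $t(\kappa^*)w_l=w_l+w_1w_{l-1}=m[1^l]+m[2,1^{l-2}]$ for $l<n$ (hence $=m[2,1^{l-2}]$ for odd $l$) and $t(\kappa^*)w_n=m[2,1^{n-2}]$, and Theorem \ref{prop:yag} reads the weights off by counting odd parts: $l$, $l-2$, $n-2$. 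You invoke exactly this strictness for the single case of $w_n$, but it settles all cases, both bounds at once, with no divisibility questions; it even yields (3), since $Q_0\,t(\kappa^*)w_{2l}=t(\kappa^*)w_{2l+1}$ and $t(\kappa^*)$ is injective.
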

\begin{proof}
Consider  $t(\kappa^*) w_l$ in $H_{O_{n-1}}^*$.
If $l\neq n$, 
$
t(\kappa^*w_l)=w_l+w_1w_{l-1}=m{[1^l]} + m{[1]}m{[1^{l-1}]}
$
from \eqref{eq:kapparep}.
If $l$ is odd, $t(\kappa^*)w_l=m{[2,1^{l-1}]}$.
If $l$ is even, $
t(\kappa^*)w_l=w_l+w_1w_{l-1}=m{[1^l]} + m{[2, 1^{l-1}]}.
$
For $l=n$, we have that
$
t(\kappa^*)w_n=w_1w_{n-1}=m{[2,1^{n-2}]}.
$
\end{proof}

\subsection{}
The stratified method works fine in ordinary cohomology,
and the result is compatible with the realization:
\begin{equation}
\rightarrow H_{SO_n}^{*-2}\xrightarrow{t({s}_*)}H_{SO_n}^*\xrightarrow{t(t^*)} H_{SO_n}^*(\bb{A}^n\smallsetminus \{0\})\xrightarrow{}.
\end{equation}
We can identify as in \eqref{eq:identify1}
$
H_{SO_n}^*(\bb{A}^n\smallsetminus \{0\})= H_{SO_n}^*/c_n
$.

The sequence  \eqref{eq:locseq2-1} becomes
\begin{equation}\label{eq:topseq2}
\rightarrow H_{SO_{n-2}}^{*-2}\xrightarrow{t(i_*)} H_{SO_{n}}^*/c_{n}
\xrightarrow{t(\kappa^*)} H_{O_{n-1}}/c_1\xrightarrow{t(\delta)}H_{SO_{n-2}}^{*-1}\rightarrow 
\end{equation}

We have the following data on \eqref{eq:topseq2}.
These are direct consequences of Proposition 2.1, \eqref{eq:kapparep},
and $t(\delta)$ is an $H^*_{SO_n}$-module mapping.
We may compare the result of Yagita in \cite[\S 8]{yag2010} by \eqref{eq:locseq2-3}.
Note we will write  the image in the quotient as the same letter, as is $w_i\in H_{SO_{2m}}^*/c_{2m}$.
\begin{prop}
\begin{enumerate}[(1)]
\item \ $t(i_*)=0$. 
\item \ $t(\kappa^*)w_i=w_i + w_{i-1}w_1$ for $2\leq i \leq n-1$, 
and $t(\kappa^*)w_{n}=w_{n-1}w_1$.
\item \ 
$t(\delta)(w_1)=1$ and  $t(\delta)(zw_1)=z$ for $z$ is a polynomial of $w_i$ for $2\leq i\leq n-2$.
\end{enumerate}
\end{prop}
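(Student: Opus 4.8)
The plan is to extract all three assertions from the exactness of \eqref{eq:topseq2}, using as input only the injectivity part of Proposition~\ref{prop:kpwt} and the bundle computation \eqref{eq:kapparep}. Assertion (1) is purely formal: since $t(\kappa^*)$ is injective by Proposition~\ref{prop:kpwt}, exactness of \eqref{eq:topseq2} at $H^*_{SO_n}/c_n$ forces $\Image t(i_*)=\Ker t(\kappa^*)=0$, so $t(i_*)=0$. A convenient byproduct is that \eqref{eq:topseq2} now breaks into short exact sequences
\[
0\to H^*_{SO_n}/c_n\xrightarrow{t(\kappa^*)}H^*_{O_{n-1}}/c_1\xrightarrow{t(\delta)}H^{*-1}_{SO_{n-2}}\to 0.
\]
Assertion (2) I would read straight off \eqref{eq:kapparep}: the realization $t(\kappa^*)$ is the topological pullback along $\kappa$, so $t(\kappa^*)w_i=w_i(\kappa^{-1}\epsilon^{-1}\gamma_n)$ is the $i$-th homogeneous term computed there, namely $w_i+w_1w_{i-1}$ for $2\le i\le n-1$ and $w_1w_{n-1}$ for $i=n$, the target ring satisfying $c_1=w_1^2=0$.

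The content is in (3), where the decisive tool is that $t(\delta)$ is a homomorphism of $H^*_{SO_n}$-modules. First I would pin down $t(\delta)(w_1)=1$ by a degree count. In cohomological degree $1$ the group $H^1_{SO_n}/c_n$ vanishes, since $H^*_{SO_n}=\Z/2[w_2,\ldots,w_n]$ has nothing in degree $1$; hence by exactness at the middle term $t(\delta)$ is injective on the one-dimensional group $H^1_{O_{n-1}}/c_1=\Z/2\{w_1\}$, and as its image lies in $H^0_{SO_{n-2}}=\Z/2$ we conclude $t(\delta)(w_1)=1$. For the general formula, recall that $H^*_{SO_n}$ acts on the middle term through $\kappa^*$ and on $H^{*-1}_{SO_{n-2}}$ through the restriction $SO_{n-2}\hookrightarrow SO_n$, under which each $w_i$ with $i\le n-2$ is sent to the class of the same name. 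Given a polynomial $z$ in $w_2,\ldots,w_{n-2}$ I lift it to the element $\tilde z\in H^*_{SO_n}$ written in the identical Stiefel-Whitney symbols. By (2) we have $t(\kappa^*)\tilde z=z+w_1R$ for some $R$, every non-leading term of $\prod_j(w_{i_j}+w_1w_{i_j-1})$ carrying a factor $w_1$; multiplying by $w_1$ and using $w_1^2=c_1=0$ collapses this to $(t(\kappa^*)\tilde z)\,w_1=z\,w_1$ in $H^*_{O_{n-1}}/c_1$. Module linearity then gives
\[
t(\delta)(zw_1)=t(\delta)\bigl((t(\kappa^*)\tilde z)\,w_1\bigr)=\tilde z\cdot t(\delta)(w_1)=\tilde z|_{SO_{n-2}}=z.
\]

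I expect the only genuine obstacle to be the bookkeeping in this last step: one must verify that in the product $\prod_j(w_{i_j}+w_1w_{i_j-1})$ every monomial apart from the leading $\prod_j w_{i_j}$ is divisible by $w_1$, so that the extra factor $w_1$ annihilates it modulo $c_1=w_1^2$. This is exactly where the restriction to indices $2\le i\le n-2$ is used, guaranteeing both that $\tilde z$ is a bona fide polynomial in the Stiefel-Whitney classes of $SO_n$ and that it is carried to $z$ unchanged by restriction to $H^*_{SO_{n-2}}$. Everything else is a direct transcription of Proposition~\ref{prop:kpwt}, \eqref{eq:kapparep}, and the module property of $t(\delta)$.
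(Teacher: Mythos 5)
Your proof is correct and takes essentially the same route as the paper, whose own justification is precisely that the proposition follows from Proposition~\ref{prop:kpwt} (injectivity of $t(\kappa^*)$ plus exactness of \eqref{eq:topseq2}, giving (1)), from the bundle computation \eqref{eq:kapparep} (giving (2)), and from the $H^*_{SO_n}$-module property of $t(\delta)$ (giving (3)). You have only made explicit the details the paper leaves implicit, namely the degree-count showing $t(\delta)(w_1)=1$ and the observation that all non-leading terms of $t(\kappa^*)\tilde z$ are divisible by $w_1$ and hence die against $w_1$ modulo $c_1=w_1^2$.
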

\subsection{$H^{*,*}(BSO_{2m+1})$}
Since $H^*_{O_n}$ and $H^*_{SO_n}$ are generated by the Stiefel-Whitney classes,
the realization mapping 
$t^{*,*}$ are epimorphisms in all these cases.
As in 1.3, $t \colon H^{p,q}_{O_n}
\rightarrow H^p_{O_n}$ is injective for every pair $(p,q)$.

When $n=2m+1$, we have that $O_{2m-1}\cong SO_{2m-1}\times \mu_2$.
This direct product structure induces the mapping 
$\pi \colon BO_{2m-1}\rightarrow BSO_{2m-1}$ deduced from multiplication of 
determinant $g\mapsto \det(g)g$ if $g\in O_{2m-1}$.
Then $\pi \cdot \epsilon = 1$ and $( \epsilon \cdot \pi)( \epsilon \cdot \pi)=( \epsilon \cdot \pi)$.
\begin{equation}\label{eq:oddB}
H^{*,*}_{O_{2m-1}}/c_1
= H^{*,*}_{\Z/2 \times SO_{2m-1}}(\mathbb{G}_m) =
H^{*,*}_{SO_{2m-1}}\{1, x\}
\end{equation}
where $x=\overline{w_1}$ has degree (1,1).
It implies the realization $t : H^{p,q}_{SO_{2m-1}} \to H^{p}_{SO_{2m-1}}$
is injective.
We can compare 
\eqref{eq:locseq2-1} and \eqref{eq:topseq2} through 
monomorphisms $t$.
Because $t(\kappa^*) t_{SO_{2m+1}}= t_{O_{2m}} \kappa^*$
is injective, $\kappa^*$ is injective, then 
we get
\begin{equation}\label{odd}
\xymatrix@C-15pt{
0 \ar[r] & H^{\ast,\ast}_{SO_{2m+1}}/c_{2m+1} \ar[r]^-{\kappa^*}&
H^{\ast,\ast}_{O_{2m}}/c_1 \ar[r]^-{\delta_2}  & 
H^{\ast-1, \ast-1}_{SO_{2m-1}} \ar[r] & 0
}
\end{equation}

\section{$H^{*,*}(BSO_{2m})$}
\subsection{}
Let us denote by $Y_{m}$ the kernel of 
$H^{*,*}_{SO_{2m}} \to H^{*}_{SO_{2m}}$.
We will inductively determine $Y_m$ so that
we assume $Y_m$ is known.
We denote by 
$\overline{Y_{m+1}}$
the kernel of
$t : H^{*,*}_{SO_{2m+2}}/c_{2m+2} \to H^{*}_{SO_{2m+2}}/c_{2m+2} $.
Later we will see in 3.3 that $(\Ker t_{SO_{2m}})/c_{2m+2}= \overline{Y_{m+1}}$.
We can combine \eqref{eq:locseq2-1} and \eqref{eq:topseq2}
to 
the following commutative diagram \label{eq:locseq3}
\begin{equation}
\xymatrix{
\ar[r]^-{\delta} & H_{SO_{2m}}^{*-2,*-1} 
\ar[r]^-{{i}_*}\ar[d]_{t_{2m}} & H_{SO_{2m+2}}^{*,*}/c_{2m+2} 
\ar[r]^-{\kappa^*}\ar[d]_{t_{2m+2}} & H_{O_{2m+1}}^{*,*}/c_1
\ar[r]^-{\delta}\ar[d]^{t_{2m+1}} & H_{SO_{2m}}^{*-1,*-1}\ar[r] \ar[d]&\\
\ar[r] & H_{SO_{2m}}^{*-2} \ar[r]^-{0} & 
H_{SO_{2m+2}}^*/c_{2m+2}\ar[r]^-{t(\kappa^*)} & H^*_{O_{2m+1}}/c_1\ar[r]^-{t(\delta)}&
H_{SO_{2m}}^{*-1}
\ar[r]^-{0}& 
.
}
\end{equation}
$t_{2m+1}$ on the right column is injective for each degree.
$\overline{w_1}$ has degree $(1,1)$ and $t\overline{w_1}=w_1$.

We obtain that
$\Im({i}_*)= \Ker(t_{2m+2})
\subset H_{SO_{2m+2}}^{*,*}/c_{2m+2}$,
using diagram-tracing argument.  Note  $t_{2m+1}$ is injective.
The restriction of $i_*$ gives an isomorphism between
$\Coker \delta$ and $\overline{Y_{m+1}}$,
and it takes $Y_m$ into $\overline{Y_{m+1}}$.
We can see $\Im \delta \cap Y_m = 0$.
For if there is $u = \delta v$ such that $t(u)=0$,
there exists $w$ such that $t(\kappa^*) t(w)= t(v)$ 
.
Because $t(\kappa^*)$  strictly preserves filtrations,
there is $w' \in H^{*,*}_{SO_{2m+2}}/c_{2m+2}$
such that $\kappa^* w'= v$. 
This means  $u=0$.

The restriction of $i_*$ gives an acyclic subcomplex $Y_m \to Y_m$, where
the second copy is in $\overline{Y_{m+1}}$, in \eqref{eq:locseq3},
and we get 
\begin{equation}\label{eq:sq3}
\xymatrix@C-5pt{
 \ar[r] & \left(H^{\ast,\ast}_{SO_{2m+2}}/Y_{m}\right)/c_{2m+2}
\ar[r]^-{\kappa^*}&
H^{\ast,\ast}_{O_{2m+1}}/c_1 \ar[r]^-{\delta_1}& 
H^{\ast-1,\ast-1}_{SO_{2m}}/Y_{m} \ar[r] & }
\end{equation}
where the induced $t_{2m}$ is now a monomorphism
and $\Coker \delta_1 \cong \overline{Y_{m+1}}/Y_m$.

By Proposition 2.3 we know that $\Coker \delta_1$ is isomorphic to
the cokernel of the restriction
$\iota^*_{2m+1} : H^{\ast,\ast}_{SO_{2m+1}} \to H^{\ast,\ast}_{SO_{2m}}/Y_m$,
up to permutation of coordinates,
which makes
a part of  the following commutative exact sequences
\begin{equation}\label{eq:final}
\xymatrix@C-15pt{
0 \ar[r] & H^{\ast,\ast}_{SO_{2m+1}}/c_{2m+1} \ar[d]^{\iota_{2m+1}^*}\ar[r]^-{\kappa^*}&
H^{\ast,\ast}_{O_{2m}}/c_1 \ar[r]^-{\delta_2} \ar[d]^{\iota_{2m}^*} & 
H^{\ast-1, \ast-1}_{SO_{2m-1}} 
\ar[d]^{\iota_{2m-1}^*}\ar[r] & 0\\
0 \ar[r] & (H^{\ast,\ast}_{SO_{2m}}/Y_{m})/c_{2m}   \ar[r]^-{\kappa^*}&
H^{\ast,\ast}_{O_{2m-1}}/c_1 \ar[r]^-{\delta_1}& H^{\ast-1,\ast-1}_{SO_{2m-2}}/Y_{m-1}\ ,
  & }
\end{equation}
where $\iota^*_{2m}$ is an epimorphism.

\subsection{}
Since 
$\kappa^* \overline{w_{2m}}= 
\overline{w_{2m}}+ \tau \overline{w_{1}w_{2m-1}}$
in $H^{*,*}_{O_{2m}}$,
$\overline{w_{2m}}$ has weight $2m$ in $SO_{2m+1}$.
On the other hand  $\overline{w_{2m}}$ in $H^{*,*}_{SO_{2m}}$ satisfies
$\kappa^* \overline{w_{2m}}= \overline{w_{1}w_{2m-1}}$,
which implies it has weight $2m-2$.
That is,
$\iota_{2m+1}^*\overline{w_{2m}}=\tau \overline{w_{2m}}$,
and  $\overline{w_{2m}}$ presents a nonzero element in $\Coker \iota_{2m+1}^*$,
where $\tau \overline{w_{2m}}=0$.

We want to claim that  $\overline{w_{2m}}$
is a generator of
$\Coker \iota_{2m+1}^*$ over the ring of even Chern classes.
Since $\kappa$ and $\iota$ commute,
we have to consider elements having factor $\overline{w_{2m}}$ only.

Consider the long exact sequence associated with \eqref{eq:final}: 
\begin{equation}
\Ker \iota^*_{2m+1} \to \Ker \iota^*_{2m} \to \Ker \iota^*_{2m-1} 
\overset{\partial} \to \Coker \iota^*_{2m+1} \to 0.
\end{equation}

Connecting mapping $\partial$ is $(\kappa^*)^{-1} \cdot \iota^*_{2m} \cdot (\delta_2)^{-1}$.
Let us take $z$ in $\Ker \iota^*_{2m-1}$.
We can take $(\delta_2)^{-1} z =\overline{w_1z}$.
If it is in ordinary cohomology, $\Coker \iota^*_{2m+1}=0$, for 
we can take $w_{2m}$ as $(\delta_2)^{-1} w_{2m-1}$ which 
go to zero by $\iota^*_{2m}$, but in motivic cohomology
$\overline{w_{2m}}$ has strictly larger weight than $\overline{w_1w_{2m-1}}$, hence 
$\overline{w_{2m}}$ presents a non-zero element in $\Coker \iota^*_{2m+1}$ as above.

Therefore it suffices to prove that in $H_{O_{2m}}^{*,*}$,
$z w_{2m}$ has weight less than or equal to the weight of $z w_1 w_{2m-1}$,
if $z$ is not a polynomial of even Chern classes.
Let $z$ be an element of monomial base $m[i_{\cdot}]$.
First we assume not all $i$'s are even.
We will use induction of length of $z$.
If it has length $1$, $m[2i+1]m[2,1^{2m-2},0]$ and $m[2i+1]m[1^{2m}]$
have weight $2m-1$ ($i>0$).
$m[2i+1, 2j+1]m[2,1^{2m-2},0]$ have weight more than $2m-2$ and $m[2i+1,2j+1]m[1^{2m}]$
have weight $2m-2$.
If in a monomial of length $j \geq 2$ all $i$ are odd, $zw_{2m}$ have weight $2m-j$,
and $zw_1w_{2m-1}$ also  have weight $2m-j$.
In general let $2j$ be the largest even member of $i_{\cdot}$ which has multiplicity $k$.
Let $z'$ be the partition discarding $2j$'s from $z$. 
Then $z$ is equal to $z'(w_k)^{2j}$ plus elements of length less than that of $z$.
$z'(w_k)^{2j}w_{2m}$ has the same weight as that of 
$z'w_{2m}$, and similarly for $w_1w_{2m-1}$.
It finishes the inductive step.

If all $i$'s are even, it is a polynomial of $(w_i)^2$ and
$zw_{2m}$ has weight $2m$ but $zw_1w_{2m-1}$ has weight $2m-2$.
If $z$ contains $w_{2j-1}^2$, there exists u such that $zw_{2m} + u w_1w_{2m}$ 
has weight $2m-2$, hence it does not represent nonzero elements in 
$\Coker \iota^*_{2m+1}$.

\subsection{}\label{subsec:cn}
We know that $Y_1=0$, and the cokernel of
$\delta$ is easily seen to be the image of $\Z/2[c_2]\{\overline{w_2}\}$,
here $\overline{w_2} \in H^{2,1}_{SO_{2}}$ in $H^{\ast,\ast}_{SO_{4}}/c_4$.
Let us define $y_{0,2}={i_2}_* \overline{w_2}$.
Then $Y_2=\Z/2[c_2, c_4]\{y_{0,2}\}$~\cite[\S 8]{yag2010}.

Assume we know that $Y_{m}$ is generated by $y_{i, m}$ 
over $\Z/2\,[\,c_2, \ldots ,c_{2m}]$ for $0\leq i \leq m-2$.
$i_*(y_{i,m})$ are non-zero elements in
$H_{SO_{2m+2}}^{*,*}/c_{2m+2}$ ($i_*$ is that in \eqref{eq:locseq3}.)
For $i_*y_{i,m}$ have degree  $2m+2$ there exist 
$y_{i,m+1}$ in $H_{SO_{2m+2}}^{*,*}$.
We will see that $(Y_{m+1}/c_{2m+2})/i_*(Y_{m})$
is generated by $i_* \overline{w_{2m}}$
over $\Z/2\,[\,c_2, \ldots ,c_{2m}]$.
There exists a unique element $y_{m-1,m+1}$ in $H^{2m+2,2m}(BSO_{2m+2})$
whose image by quotient of $c_{2m+2}$ is $i_* \overline{w_{2m}}$.
We get an isomorphism 
$$Y_{m+1} = \Z/2\,[\,c_2, c_4, \ldots ,c_{2m+2}]
\{y_{0,m+1},\ldots , y_{m-1,m+1}\}.$$
Multiplicative relations are consequence of 
the projection formula \cite{soulynin}.

Finally
we consider ${s}_*$ in (\ref{eq:locseq1})
as an $H_{GL_{2m}}^{*,*}$-module mapping.
$c_{2m} \cup \ $ is injective on 
$\Ker t^{*,*}_{SO_{2m}}$ and $H_{SO_{2m}}^*$, so that
we can identify
$H_{SO_n}^{*,*}(\bb{A}^n -\{0\}) = H_{SO_n}^{\ast,\ast}/c_n$
for all $n$.

\bibliographystyle{plain}


\begin{thebibliography}{10}


\bibitem{e-g1995}
D.~Edidin \and W.~Graham.
Characteristic classes and quadratic bundles.
\emph{Duke Math. J.} \textbf{78} (1995) 277--299.

\bibitem{e-g1998}
{ D.~Edidin \and W.~Graham}.
Equivariant intersection theory.
\emph{Invent. Math.} \textbf{131} (1998) 595--634.

\bibitem{fie2001}
{ R.~Field.}
The {C}how ring of the classifying space ${BSO}(2n,\bb{C})$.
\emph{J. of Algebra} \textbf{350} (2012) 330--339.

\bibitem{fie2001a}
{ R.~Field.}
The {C}how ring of the symmetric space
  ${G}l(2n,\bb{C})/{SO}(2n,\bb{C})$.
\emph{J. of Algebra} \textbf{349} (2012) 364--371.

\bibitem{gro1958}
{ A.~Grothendieck.}
Torsion homologique et sections rationelles.
\emph{S\'eminaire Claude Chevalley}, tome \textbf{3} exp.\textbf{5}(1958).


\bibitem{i-y2010}
{ K.~Inoue \and N.~Yagita.}
The complex cobordism of ${BSO}_n$.
\emph{Kyoto J. Math.} \textbf{50} (2010) 307--324.

\bibitem{k-y1993}
{ A.~Kono \and N.~Yagita.}
Brown-{P}eterson and ordinary cohomology theories of classifying
  spaces for compact {L}ie groups.
\emph{Trans. Amer. Math. Soc.} \textbf{339} (1993) 781--798.

\bibitem{m-v2006}
{ L.~Molina \and A.~Vistoli.}
On the {C}how rings of classifying spaces for classical groups.
\emph{Rend. Sem. Mat. Univ. Padova} \textbf{116} (2006) 271--298.

\bibitem{m-v1999}
{ F.~Morel \and V.~Voevodsky.}
$\mathbb{A}^1$-homotopy theory of schemes.
\emph{Publ. Math. IHES} \textbf{90} (1999) 45--143.
\bibitem{ovv}
{ D.~Orlov, A.~Vishik \and V.~Voevodsky}
An exact sequence for $K^M_*/2$ with applications to quadratic forms.
\emph{Ann. of Math.} \text{165} (2007) 1--13.
\bibitem{pand1998}
{ R.~Pandraharipande.}
Equivarinat {C}how rings for ${O}(k)$, ${SO}(2k+1)$ and ${SO}(4)$.
\emph{J. Reine Angew. Math.} \textbf{496} (1998) 131--148.

\bibitem{panin}
{ I.~Panin}
Oriented cohomology theories of algebraic varieties. II (After I. Panin and A. Smirnov). 
\emph{Homology, Homotopy Appl}. \textbf{11} (2009) 349--405.

\bibitem{soulynin}
{ A.~A.~Soulynin.}
Gysin homomorphism in generalized cohomology theories.
\emph{St.Petersburg Math.J}. \textbf{17} (2006) 511--525.

\bibitem{s-v2000}
{ A.~Suslin \and V.~Voevodsky.}
Bloch-Kato conjecture and motivic cohomology with finite
  coefficients, volume \textbf{548} of \emph{NATO Sci. Ser. C Math. Phys. Sci.},  117--189.
( Kluwer Acad. Publ, Dordrecht, 2000.)

%
\bibitem{tot1997}
{ B.~Totaro.}
Torsion algebraic cycles and complex cobordism.
\emph{J. Amer. Math. Soc.} \textbf{10} (1997) 467--493.

\bibitem{tot1999}
{ B.~Totaro.}
The {C}how ring of a classifying space.
\emph{Algebraic K-theory} (Seattle, WA, 1997), Amer. Math. Soc, \textbf{67} (1999) 249--281.

\bibitem{vez2000}
{ G.~Vezzosi.}
On the {C}how ring of the classifying stack of ${PGL}_{3,\bb{C}}$.
\emph{J. Reine Angew. Math.} \textbf{523} (2000) 1--54.

\bibitem{voe1996}
{ V.~Voevodsky.}
The Milnor conjecture.
\emph{preprint} (1996).


\bibitem{voe2003-2}
{ V.~Voevodsky.}
Motivic cohomolgy with $\mathbb{Z}/2$-coefficients.
\emph{Publ. Math. IHES} \textbf{98} (2003) 59--104.

\bibitem{voe2003}
{ V.~Voevodsky.}
Reduced power operations in motivic cohomology.
\emph{Publ. Math. IHES} \textbf{98} (2003) 1--57.

\bibitem{wil1984}
{ W.~Stephen Wilson.}
The complex cobordism of ${BO}(n)$.
\emph{J. London Math. Soc.} \textbf{29} (1984) 352--366.

\bibitem{yag2003}
{ N.~Yagita.}
Exmples for the Mod $p$ Motivic Cohomology of
Classifying Spaces.
\emph{Trans. AMS. } \text{355} (2003) 4427--4450.


\bibitem{yag2005}
{ N.~Yagita.}
Applications of {A}tiyah-{H}irzebruch spectral sequence for motivic
  cobordism.
\emph{Proc. London Math. Soc.} \textbf{90} (2005) 783--816.

\bibitem{yag2010}
{ N.~Yagita.}
Coniveau filtration of cohomology of groups.
\emph{Proc. London Math. Soc.} \textbf{101} (2010) 179--206.

\end{thebibliography}
\end{document}